\newtheorem{thm}{Theorem}[section]
\newtheorem{lem}[thm]{Lemma}
\theoremstyle{definition}
\newtheorem{defn}{Definition}[section]
\theoremstyle{Conjecture}
\theoremstyle{remark}
\newtheorem{rem}{Remark}[section]
\theoremstyle{Example}
\newcommand{\be}{\begin{equation}}
\newcommand{\ee}{\end{equation}}
\newcommand{\bea}{\begin{eqnarray}}
\newcommand{\eea}{\end{eqnarray}}
\newcommand{\ben}{\begin{eqnarray*}}
\newcommand{\een}{\end{eqnarray*}}
\newcommand{\bet}{\begin{equation}
\begin{split}}
\newcommand{\eet}{\end{split}
\end{equation}}
\begin{document}
\title[A Characterization of regular points by $L^2$ Extension Theorem]
{A Characterization of regular points by\\ $L^2$ Extension Theorem}

\author{Qi'an Guan}
\address{Qi'an Guan: School of Mathematical Sciences, and Beijing International Center for Mathematical Research,
Peking University, Beijing, 100871, China.}
\email{guanqian@amss.ac.cn}
\author{Zhenqian Li}
\address{Zhenqian Li: School of Mathematical Sciences, Peking University, Beijing, 100871, China.}
\email{lizhenqian@amss.ac.cn}

\thanks{The first author was partially supported by NSFC-11522101 and NSFC-11431013.}

\date{\today}
\subjclass[2010]{32C30, 32C35, 32U05}
\thanks{\emph{Key words}. $L^2$ extension theorem, Plurisubharmonic function, Integral closure of ideals}

\begin{abstract}
In this article, we present that the germ of a complex analytic set at the origin in $\mathbb{C}^n$ is regular if and only if the related $L^2$ extension theorem holds. We also obtain a necessary condition of the $L^2$ extension of bounded holomorphic sections from singular analytic sets.
\end{abstract}
\maketitle

\section{introduction}\label{sec:introduction}
Let $M$ be a Stein manifold and $X\subset M$ a closed complex subspace. Cartan extension theorem says that any holomorphic function $f$ on $X$ can be extended to a holomorphic function $F$ on the Stein manifold $M$. Then, it is natural to ask that if the holomorphic function $f$ has some special property, whether we can find an extension $F$ possessing the same property. In \cite{O-T}, Ohsawa and Takegoshi considered the extension of $L^2$ holomorphic functions. More precisely, they proved the following famous Ohsawa-Takegoshi $L^2$ extension theorem:

\begin{thm} \emph{(\cite{O-T}).} \label{O-T}
Let $\Omega$ be a bounded pseudoconvex domain in $\mathbb{C}^n$. Let $\varphi$ be a plurisubharmonic function on $\Omega$. Let $H$ be an $m$-dimensional complex plane in $\mathbb{C}^n$. Then for any holomorphic function on $H{\cap}\Omega$ satisfying $$\int_{H{\cap}\Omega}|f|^2e^{-2\varphi}d\lambda_H<\infty,$$ there exists a holomorphic function $F$ on $\Omega$ such that $F|_{H{\cap}\Omega}=f$ and
$$\int_\Omega|F|^2e^{-2\varphi}d\lambda_n\leq C_\Omega\cdot\int_{H{\cap}\Omega}|f|^2e^{-2\varphi}d\lambda_H,$$
where $d\lambda_H$ is the Lebesgue measure, and $C_\Omega$ is a constant which only depends on the diameter of $\Omega$ and $m$.
\end{thm}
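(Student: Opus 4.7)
The plan is to prove Theorem~\ref{O-T} by H\"ormander's $L^2$-method for $\bar\partial$, augmented by the twisted Bochner-Kodaira-Nakano identity of Ohsawa-Takegoshi. After a unitary change of coordinates I assume $H=\{z''=0\}$, with $z=(z',z'')\in\mathbb{C}^m\times\mathbb{C}^{n-m}$ and $f=f(z')$. Pick $\chi\in C_c^\infty([0,\infty))$ equal to $1$ near $0$ and set $\tilde F(z):=\chi(|z''|^2)\,f(z')$, a smooth (non-holomorphic) extension of $f$ to $\Omega$ whose $\bar\partial$ is an $(0,1)$-form $\alpha$ supported away from $H$. The extension will be $F:=\tilde F-u$, where $u$ solves $\bar\partial u=\alpha$ subject to the boundary condition $u|_H=0$ together with the right $L^2$ bound.

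To force the trace condition and to unlock an estimate in terms of $\int_{H\cap\Omega}|f|^2e^{-2\varphi}\,d\lambda_H$, I solve $\bar\partial u=\alpha$ against a weight that is singular along $H$: set $\varphi_\epsilon:=\varphi+\tfrac{n-m}{2}\log(|z''|^2+\epsilon^2)$ and apply the twisted Bochner-Kodaira-Nakano identity against $\varphi_\epsilon$ with a twist factor $\eta=\eta(-\log(|z''|^2+\epsilon^2))$. A suitably concave choice of $\eta$ renders
\[
\sqrt{-1}\,\partial\bar\partial(\eta+2\varphi_\epsilon)-\sqrt{-1}\,\partial\eta\wedge\bar\partial\eta/\eta \;\ge\; 0,
\]
the plurisubharmonicity of $\varphi$ together with the curvature of $-\log(|z''|^2+\epsilon^2)$ providing the positivity. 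H\"ormander's theorem then yields $u_\epsilon$ with
\[
\int_\Omega \frac{|u_\epsilon|^2 e^{-2\varphi}}{(|z''|^2+\epsilon^2)^{n-m}}\,d\lambda_n \;\le\; C_\Omega\int_\Omega \langle B_\epsilon^{-1}\alpha,\alpha\rangle e^{-2\varphi}\,d\lambda_n,
\]
where $B_\epsilon$ is the twisted curvature. Since $\alpha$ vanishes in a neighborhood of $H$, the right-hand side is controlled uniformly in $\epsilon$; applying Fubini in the $(n-m)$-dimensional normal slices together with the sub-mean-value inequality for the plurisubharmonic function $|f|^2 e^{-2\varphi}$ will convert it into $C_\Omega\int_{H\cap\Omega}|f|^2 e^{-2\varphi}\,d\lambda_H$, with the constant depending only on $\mathrm{diam}(\Omega)$ and $m$.

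I then pass to a weakly convergent subsequence $u_\epsilon\rightharpoonup u$ in $L^2_{\mathrm{loc}}(\Omega;e^{-2\varphi}d\lambda_n)$ and set $F:=\tilde F-u$, which is holomorphic on $\Omega$ since $\bar\partial F=0$; finiteness of the singular norm $\int|u|^2 e^{-2\varphi}|z''|^{-2(n-m)}$ forces $u|_H=0$, so $F|_{H\cap\Omega}=\tilde F|_{H\cap\Omega}=f$, and the $L^2$ estimate transfers to $F$. The main obstacle is the calibration of $\eta$ in the second paragraph: one has to simultaneously \emph{(i)} keep the Bochner-Kodaira-Nakano lower bound non-negative uniformly in $\epsilon$, and \emph{(ii)} arrange that integrating out the $(n-m)$ normal directions transforms the right-hand side into $\int_{H\cap\Omega}|f|^2 e^{-2\varphi}\,d\lambda_H$ up to a constant depending only on the diameter of $\Omega$ and on $m$. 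This delicate trade-off is the technical heart of the argument and is precisely where the specific form of the twist factor $\eta$ devised in \cite{O-T} enters.
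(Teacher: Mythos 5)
The paper itself offers no proof of Theorem~\ref{O-T}; it is quoted verbatim from Ohsawa--Takegoshi \cite{O-T}, so there is nothing internal to compare against. Your outline follows the standard twisted Bochner--Kodaira--H\"ormander strategy of \cite{O-T}, which is the right framework, but as written it has a genuine gap at its quantitative heart. First, a structural point: with a \emph{fixed} cutoff $\chi(|z''|^2)$ the error form $\alpha=\bar\partial\tilde F$ is supported at a fixed positive distance from $H$, and no amount of Fubini can convert $\int_{\mathrm{supp}\,d\chi}|f|^2(\cdots)e^{-2\varphi}\,d\lambda_n$ into $C_\Omega\int_{H\cap\Omega}|f|^2e^{-2\varphi}\,d\lambda_H$, because $e^{-2\varphi}$ off $H$ is in no way controlled by its values on $H$ (take $\varphi=N\,\mathrm{Re}\,z_n$). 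The cutoff must shrink to an $\varepsilon$-collar of $H$, so that the right-hand side becomes a collar average that one sends to the integral over $H$ as $\varepsilon\to0$; the constant $C_\Omega$ is born precisely in that limit.

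Second, and more seriously, the tool you invoke to evaluate that collar limit --- ``the sub-mean-value inequality for the plurisubharmonic function $|f|^2e^{-2\varphi}$'' --- does not exist: $|f|^2e^{-2\varphi}$ is \emph{not} plurisubharmonic (already $e^{-2|z|^2}$ fails), and plurisubharmonicity of $\varphi$ gives $\varphi(z',0)\le\operatorname{avg}_{|z''|=r}\varphi(z',z'')$, which via Jensen yields a \emph{lower} bound for the collar average of $e^{-2\varphi}$ in terms of $e^{-2\varphi(z',0)}$, i.e.\ exactly the wrong direction for bounding the right-hand side by $\int_{H\cap\Omega}|f|^2e^{-2\varphi}d\lambda_H$. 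The standard repair, which your sketch omits, is to prove the estimate first for $\varphi$ smooth (or continuous) on a neighborhood of $\overline\Omega$, where the collar limit follows from plain continuity and dominated convergence, and then to treat general $\varphi$ by a decreasing regularization $\varphi_\nu\downarrow\varphi$ on a pseudoconvex exhaustion $\Omega_\nu\uparrow\Omega$, using $e^{-2\varphi_\nu}\le e^{-2\varphi}$ to keep the right-hand side majorized by $\int_{H\cap\Omega}|f|^2e^{-2\varphi}d\lambda_H$ and a weak-compactness/Fatou argument on the left. A smaller but real defect of the same kind: $\tilde F(z)=\chi(|z''|^2)f(z')$ is undefined when $(z',0)\notin\Omega$, so before cutting off you must first extend $f$ holomorphically (without estimates) to $\Omega$ by Cartan's Theorem~B, which is legitimate since $\Omega$ is pseudoconvex, hence Stein. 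With these repairs the argument is the classical one; without them the key inequality producing $C_\Omega\int_{H\cap\Omega}|f|^2e^{-2\varphi}d\lambda_H$ is unjustified.
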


It is natural to ask:

\noindent{\textbf{Question.}} Let $\Omega\subset\mathbb{C}^n$ be a domain and $A\subset\Omega$ an analytic set through the origin $o$. If the above $L^2$ extension theorem holds for any bounded pseudoconvex domain $\tilde\Omega\ni o$ such that $A\cap\tilde\Omega$ is an analytic set in $\tilde\Omega$, can one obtain that $o$ is a regular point of $A$?

In this article, we will present a positive answer, i.e.,

\begin{thm} \label{main1}
Let $\Omega\subset\mathbb{C}^n\ (n\geq2)$ be a domain, $A\subset\Omega$ an analytic set through the origin $o$. Then, for small enough ball $B_{r}(0)\subset\Omega$, the $L^2$ extension theorem holds for $(B_{r}(0), A)$ if and only if $o$ is a regular point of $A$.
\end{thm}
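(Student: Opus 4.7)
The plan is to handle the two directions of the equivalence separately.

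\emph{Forward direction} ($o$ regular implies the $L^{2}$ extension). Assume $o$ is a regular point, so that for $r$ sufficiently small, $A\cap B_{r}(0)$ is a closed complex submanifold of the pseudoconvex domain $B_{r}(0)$. I would deduce the desired extension from the standard generalization of Theorem~\ref{O-T} to arbitrary closed complex submanifolds of pseudoconvex domains (due to Ohsawa, Manivel, and Demailly). Equivalently, one may pick a local biholomorphism $\psi$ near $o$ straightening $A$ to a linear subspace on a smaller open set, apply Theorem~\ref{O-T} inside the chart, and transfer the weighted $L^{2}$ estimates back to $B_{r}(0)$; the PSH weight pulls back to a PSH weight, and the biholomorphism Jacobians are uniformly bounded above and below.

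\emph{Converse direction} ($L^{2}$ extension implies $o$ is regular). I would argue by contrapositive. Assume $o$ is singular; the goal is to construct, for every sufficiently small $B_{r}(0)$, a plurisubharmonic $\varphi$ on $B_{r}(0)$ and a holomorphic $f$ on $A\cap B_{r}(0)$ such that $\int_{A\cap B_{r}}|f|^{2}e^{-2\varphi}\,d\lambda_{A}<\infty$ but no holomorphic extension $F$ of $f$ to $B_{r}(0)$ can satisfy $\int_{B_{r}}|F|^{2}e^{-2\varphi}\,d\lambda_{n}\le C\int_{A\cap B_{r}}|f|^{2}e^{-2\varphi}\,d\lambda_{A}$ with any finite $C$, thereby refuting the $L^{2}$ extension hypothesis.

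The construction leans on the integral closure of the defining ideal at $o$: singularity of $A$ at $o$ is reflected by a strict gap between (suitable powers of) $\mathcal{I}:=I_{A,o}$ and their integral closures $\overline{\mathcal{I}^{k}}$. I would pick a tuple of holomorphic functions $g$ associated to generators of a suitable ideal related to $\mathcal{I}$, consider weights of the form $\varphi_{\epsilon}=\frac{c}{2}\log(\|g\|^{2}+\epsilon^{2})$ for small $\epsilon>0$, and choose $f$ derived from a germ $h\in\overline{\mathcal{I}^{k}}\setminus\mathcal{I}^{k}$ via its restriction to $A_{\mathrm{reg}}$. Along $A$, Brian\c{c}on--Skoda-type integral-closure estimates keep $\int_{A}|f|^{2}e^{-2\varphi_{\epsilon}}$ under control uniformly in $\epsilon$, while the hypothesized $L^{2}$ extension, applied to this family and passed to the limit $\epsilon\to0$, would invoke Skoda's $L^{2}$ division theorem on $B_{r}(0)$ to force $h$ to lie in $\mathcal{I}^{k}$ itself, contradicting the choice of $h$.

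The main obstacle is calibrating $\varphi$ and $f$ so that these integrability conditions on $A$ and the Skoda division conclusion on $B_{r}(0)$ actually cooperate: in particular, because Lelong numbers of PSH restrictions satisfy $\nu(\varphi|_{A},o)\ge\nu(\varphi,o)$, the weight cannot be a mere multiple of $\log|z|$; it must be anisotropic and adapted to the defining equations of $A$. The heart of the converse is thus an algebraic--analytic translation: the $L^{2}$ extension, combined with Skoda's $L^{2}$ methods, would collapse $\overline{\mathcal{I}^{k}}$ to $\mathcal{I}^{k}$ at $o$, and this collapse characterizes regularity of the local ring of $A$ at $o$.
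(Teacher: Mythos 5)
Your forward direction is fine and matches what the paper takes for granted (it only proves the necessity). For the converse, your overall skeleton --- a weight of the form $c\log|g|$, an integrability estimate on $A$ coming from an integral-closure/boundedness relation, strong use of Skoda division to force an ideal membership contradicting the choice of the extended function --- is indeed the shape of the paper's argument. But the specific construction you propose breaks down at its first step. You take $h\in\overline{\mathcal{I}^{k}}\setminus\mathcal{I}^{k}$ with $\mathcal{I}:=I_{A,o}$ the defining ideal and set $f=h|_{A_{\mathrm{reg}}}$. Since $I_{A,o}$ is radical, $\overline{\mathcal{I}^{k}}\subset\overline{\mathcal{I}}\subset\sqrt{\mathcal{I}}=\mathcal{I}$, so $h$ vanishes identically on $A$ and $f\equiv 0$: it extends by $F=0$ with zero norm, and no contradiction can arise. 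Moreover, your premise that singularity of $A$ at $o$ is detected by a gap $\mathcal{I}^{k}\subsetneq\overline{\mathcal{I}^{k}}$ for the defining ideal is unsubstantiated and is not the criterion one needs. The paper instead works inside the local ring $\mathcal{O}_{A,o}$: after the local parametrization $(z';z'')=(z_1,\dots,z_d;z_{d+1},\dots,z_n)$ with $|z''|\le C|z'|$ on $A$ near $o$, it takes the parameter ideal $\mathcal{I}=(\bar z_{1},\dots,\bar z_{d})\cdot\mathcal{O}_{A,o}$ and uses the embedding-dimension criterion (singular iff $\dim_{\mathbb{C}}\mathfrak{m}_{A,o}/\mathfrak{m}_{A,o}^{2}\ge d+1$) to produce a coordinate $\bar z_{k_{0}}$ with $k_0>d$ and $\bar z_{k_{0}}\notin\mathcal{I}$. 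The function to be extended is $f=\bar z_{k_{0}}$, which is not identically zero, and its weighted integrability on $A$ against $|z'|^{-2d}$ follows from $|z_{k_{0}}|\le C|z'|$ together with the volume estimate $\int_{U\cap A}|z|^{-2(d-1)}dV_{A}<\infty$ of Lemma~\ref{finiteness}; this is the calibration you flag as the ``main obstacle'' but do not carry out.

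Two further gaps. First, Skoda's division (Theorem~\ref{Skoda}) with $r=d$ generators requires integrability against $|z'|^{-2(d+\varepsilon)}$ for some $\varepsilon>0$, while the hypothesized $L^{2}$ extension only delivers the exponent $d$; the paper bridges this with the strong openness theorem (Theorem~\ref{SOC}), and your $\varphi_{\epsilon}$-regularization with a limit $\epsilon\to 0$ does not substitute for it. Second, the argument must first reduce to an irreducible germ $(A,o)$; the reducible case is handled by a separate and quite different construction (Remark~\ref{reducible}, following Ohsawa), not by the division scheme.
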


We also present a necessary condition of the $L^2$ extension of bounded holomorphic sections from singular analytic sets as follows:

\begin{thm} \label{main2}
Let $\Omega\subset\mathbb{C}^n\ (n\geq2)$ be a domain and $o\in\Omega$ the origin. Let $A\subset\Omega$ be an analytic set through $o$ with $\dim_oA=d\ (1\leq d\leq n-1)$. If the germ $(A,o)$ of $A$ at $o$ is reducible or \emph{ord}$\mathscr{I}_{A,o}:=\min\{\mbox{\emph{ord}}_o(f)|f\in\mathscr{I}_{A,o}\}\geq d+1$, then there exists a small enough ball $B_{r_0}(0)\subset\Omega$, a holomorphic function $f$ on $B_{r_0}(0){\cap}A$ and a plurisubharmonic function $\varphi$ on $B_{r_0}(0)$ with bounded $|f|^2e^{-2\varphi}$ such that, for any $r<r_0$, $f$ has no holomorphic extension $F$ to $B_r(0)$ satisfying $$\int_{B_r(0)}|F|^2e^{-2\varphi}d\lambda_n<\infty,$$
\end{thm}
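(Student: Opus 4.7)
The plan is to treat the two hypotheses of the theorem separately, in each case exhibiting an explicit pair $(f, \varphi)$ for which any candidate holomorphic extension $F$ of $f$ would be forced, by $L^2$-integrability against $e^{-2\varphi}$, into a specific ambient ideal that cannot be reached modulo $\mathscr{I}_{A, o}$.

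\emph{The reducible case.} Decompose $(A, o) = (A_1, o) \cup (A_2, o)$ into proper subgerms containing $o$, and pick $g_i \in \mathscr{I}_{A_i, o}$ whose restriction to each irreducible component of $A_{3-i}$ is not identically zero; this is possible by prime avoidance, since the components are distinct. After a generic choice the function $\psi := g_1 + g_2$ is reduced at $o$. Set $\varphi := \log|\psi|$ (plurisubharmonic) and $f := (g_2 - g_1)|_A$. Because $g_i|_{A_i} \equiv 0$, the formulas $\psi|_{A_i} = g_j|_{A_i}$ and $f|_{A_i} = \pm g_j|_{A_i}$ for $j \neq i$ give $|f|/|\psi| \equiv 1$ on $A$ away from $A_1 \cap A_2$, so $|f|^2 e^{-2\varphi}$ is bounded on $A$. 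For the obstruction, the reducedness of $\psi$ forces any $F$ with $\int_{B_r} |F|^2/|\psi|^2\, d\lambda_n < \infty$ to be of the form $F = \psi H$ for a holomorphic $H$; then $F|_A = f$, read separately on $A_1$ and on $A_2$, yields $H|_{A_1} \equiv 1$ and $H|_{A_2} \equiv -1$, contradicting $H(o) \in \{1, -1\}$ at the common point $o \in A_1 \cap A_2$.

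\emph{The high-order case.} Assume $(A, o)$ irreducible with $a := \mathrm{ord}\,\mathscr{I}_{A, o} \geq d + 1$. After a generic linear change of coordinates $(z_1, \ldots, z_n) = (z', z'')$ with $z' = (z_1, \ldots, z_d)$, the projection $\pi: A \to \mathbb{C}^d$, $z \mapsto z'$, is finite with $\pi^{-1}(0) \cap A = \{o\}$ locally. Set $\varphi := d \log|z'|$ (plurisubharmonic) and $f := z_{d+1}^d|_A$; note $z_{d+1}^d \notin \mathscr{I}_{A, o}$ because $\mathrm{ord}(z_{d+1}^d) = d < a$. The $L^2$ obstruction rests on two points: first, $\int_{|z'| < r} |z'|^{-2d}\, d\lambda_d$ diverges at the origin, so any holomorphic $F$ with $\int_{B_r} |F|^2/|z'|^{2d}\, d\lambda_n < \infty$ must satisfy $F|_{\{z' = 0\}} \equiv 0$; second, writing $F = z_{d+1}^d + g$ with $g \in \mathscr{I}_{A, o} \subset \mathfrak{m}^{a} \subset \mathfrak{m}^{d+1}$, every pure-$z''$ monomial in $g$ has total degree $\geq d + 1$, so $g|_{\{z' = 0\}} \in \mathfrak{m}_{z''}^{d + 1}$, whereas $z_{d+1}^d \in \mathfrak{m}_{z''}^d \setminus \mathfrak{m}_{z''}^{d + 1}$. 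Consequently $F|_{\{z' = 0\}}$ carries a nonvanishing degree-$d$ leading term, the desired contradiction. The boundedness $|f|^2 e^{-2\varphi} = |z_{d+1}|^{2d}/|z'|^{2d} \leq C$ on $A$ reduces to the geometric inequality $|z_{d+1}| \leq C|z'|$ on $A$ near $o$.

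The principal obstacle is establishing this last inequality. On the tangent cone $C_o A$ it follows quickly: a generic $(n-d)$-dimensional linear subspace through $o$ meets the $d$-dimensional cone $C_o A$ only at the vertex, so $|z_{d+1}|/|z'|$ descends to a continuous function on the compact projectivization $\mathbb{P}(C_o A)$ and is therefore bounded. To transfer the bound to $A$ itself I plan to analyze the Weierstrass polynomial satisfied by $z_{d+1}$ on $A$ over $\mathcal{O}_{\mathbb{C}^d, 0}$, using the ord hypothesis to bound the vanishing orders of its coefficients; equivalently, one shows that $z_{d+1}$ is integral over the ideal $(z') \mathcal{O}_{A, o}$ and invokes a \L{}ojasiewicz-type estimate. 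Achieving finiteness of $\pi|_A$, transversality of $C_o A$ to $\{z' = 0\}$, and this integrality simultaneously for a single generic coordinate choice is the main technical step.
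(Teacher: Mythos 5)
Your proposal is correct in its overall structure, and the interesting part is that your high-order case reaches the contradiction by a genuinely more elementary route than the paper. The reducible case is essentially Ohsawa's argument, which is exactly what the paper uses (its Remark 2.2): your choice $f=(g_2-g_1)|_A$, $\varphi=\log|g_1+g_2|$ is a slightly cleaner variant of the paper's $f=f_1(f_1-f_2)/(f_1+f_2)$, $\varphi=\log|f_1-f_2|$, and your observation that $F/\psi\in L^2_{loc}$ extends holomorphically makes the claimed reducedness of $\psi$ unnecessary. In the high-order case you pick the same pair as the paper does for $\lambda=d$, namely $f=z_{d+1}^d|_A$ and $\varphi=d\log|z'|$, but where the paper deduces $F\in(z_1,\dots,z_d)\cdot\mathcal{O}_n$ from $\int|F|^2|z'|^{-2d}<\infty$ via the strong openness theorem followed by Skoda's division theorem, you get the same conclusion by Fubini: $|z'|^{-2d}$ is non-integrable on $d$-dimensional slices, so $F|_{\{z'=0\}}\equiv 0$, i.e.\ $F\in(z_1,\dots,z_d)\cdot\mathcal{O}_n$ since this ideal cuts out the linear subspace $\{z'=0\}$. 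This shortcut is available precisely because the $\hat g_k$ here are coordinate functions defining a smooth complete intersection; the paper's Skoda-based argument is the one that generalizes to arbitrary generators (as needed for its stronger conclusion $(\overline{\mathcal{I}})^\lambda\not\subset\mathcal{I}$ for all admissible $\lambda$), but for the theorem as stated your single $f$ suffices, and your order computation $F(0,z'')=z_{d+1}^d+g(0,z'')$ with $\mathrm{ord}_o\,g\geq d+1$ is the same algebra that makes $\bar z_{d+1}^d\notin(\bar z_1,\dots,\bar z_d)\cdot\mathcal{O}_{A,o}$ in the paper. The one step you leave open, $|z_{d+1}|\leq C|z'|$ on $A$ near $o$, is not a real obstacle: it is the standard cone condition of the local parametrization theorem in generic coordinates (a single genericity condition, $\{z'=0\}\cap C_oA=\{o\}$, yields it together with finiteness of the projection), it holds for any pure $d$-dimensional germ and does not use the hypothesis $\mathrm{ord}\,\mathscr{I}_{A,o}\geq d+1$ as you suggest; the paper closes it by combining Demailly's Weierstrass normalization (coefficients $a_{jk}\in\mathfrak{m}^j$) with the metric characterization of integral closure from Lejeune-Jalabert and Teissier. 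You should fill in that citation, but no new idea is needed.
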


In particular, we can take $A$ to be hypersurfaces with Brieskorn singularities in $\mathbb{C}^n$, i.e., $A:=\{z_1^{\alpha_1}+z_2^{\alpha_2}+\cdots+z_m^{\alpha_m}=0\}\subset{\mathbb{C}^n}$, where $2\leq m\leq n,\ \alpha_k\geq n$ are positive integers.

\begin{rem}
In \cite{D-M}, Diederich and Mazzilli gave an example with a surface $A$ defined by equation $z_1^2+z_2^q=0$ in $\mathbb{C}^3$, where $q>3$ is any fixed uneven integer. Moreover, Ohsawa also presented an example with $A=\{z_1z_2=0\}\in\mathbb{C}^2$ in \cite{O}.
\end{rem}

\section{proof of main results}

For the convenience, firstly we recall the following notion of integral closure of ideals.

\begin{defn} (see \cite{LJ-T}).
Let $R$ be a commutative ring and let $I$ be an ideal of $R$. An element $h\in R$ is said to be integrally dependent on $I$ if it satisfies a relation
\[ h^d+a_1h^{d-1}+...+a_d=0 \quad (a_i\in{I}^i, 1\le i\le d). \]

The set $\bar{I}$ consisting of all elements in $R$ which are integrally dependent on $I$ is called the integral closure of $I$ in $R$, which is an ideal of $R$. $I$ is called integrally closed if $I=\bar{I}$.
\end{defn}

To prove main results, we need the following Skoda's division theorem.

\begin{thm} \emph{(see \cite{De}, Chapter VIII, Theorem 9.10).} \label{Skoda}
Let $\Omega$ be a pseudoconvex open subset of $\mathbb{C}^n$, let $\varphi$ be a plurisubharmonic function and $g=(g_1,...,g_r)$ be a $r$-tuple of holomorphic functions on $\Omega$. Set $m=\min\{n,r-1\}$. Then for every holomorphic function $f$ on $\Omega$ such that
$$I=\int_{\Omega}|f|^{2}|g|^{-2(m+1+\varepsilon)}e^{-\varphi}d\lambda_n<\infty,$$
there exist holomorphic functions $(h_1,...,h_r)$ on $\Omega$ such that $f=\sum\limits_{k=1}^{r}h_kg_k$ and
$$\int_{\Omega}|h|^{2}|g|^{-2(m+\varepsilon)}e^{-\varphi}d\lambda_n\leq(1+m/\varepsilon)I,$$
where $|g|^2=|g_1|^2+|g_2|^2+\cdots+|g_r|^2$.
\end{thm}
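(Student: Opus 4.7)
\textbf{Proof proposal for Theorem \ref{Skoda}.}

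My plan is to follow Skoda's original $L^2$ approach: convert the division problem $f=\sum_k h_k g_k$ into a weighted $\bar\partial$-equation on a kernel subbundle, then apply a twisted Bochner--Kodaira--Nakano identity whose essential curvature positivity comes from $\log|g|^2$.

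I would begin with the natural non-holomorphic particular solution $\alpha_k := f\bar{g}_k/|g|^2$ on $\{g\neq 0\}$, which satisfies $\sum_k \alpha_k g_k = f$ there. Any holomorphic solution $h$ differs from $\alpha$ by a section $\beta=\alpha-h$ of the kernel subbundle
\[
E := \ker\!\Bigl(v\mapsto \textstyle\sum_k v_k g_k\Bigr)\subset \Omega\times\mathbb{C}^r,
\]
with $\bar\partial\beta = \bar\partial\alpha$. Conversely, any $E$-valued $\beta$ solving $\bar\partial\beta = \bar\partial\alpha$ produces $h:=\alpha-\beta$, holomorphic on $\{g\neq 0\}$ and extending across $\{g=0\}$ by Riemann removable singularities once the $L^2$-bound is in hand. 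The problem thus reduces to solving this $E$-valued, weighted $\bar\partial$-equation.

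For the estimate, I would put the weight $e^{-\psi}$ with $\psi:=\varphi+(m+\varepsilon)\log|g|^2$ on $\Omega\times\mathbb{C}^r$ and invoke Bochner--Kodaira--Nakano for $E$-valued $(n,1)$-forms. The curvature contribution has two sources: the second fundamental form of $E\subset \Omega\times\mathbb{C}^r$ (induced from the holomorphic map $v\mapsto \sum v_k g_k$) and the Hessian of $(m+\varepsilon)\log|g|^2$. Skoda's core algebraic identity -- driven by $\sum|g_k|^2/|g|^2=1$ and Cauchy--Schwarz applied in the $r-1$ transverse directions -- shows that these combine into a positive operator on $E$-valued $(n,1)$-forms whose inverse has operator norm at most $(m+\varepsilon)/\varepsilon$, with $m=\min\{n,r-1\}$ arising from the interplay of form degree $n$ and subbundle rank $r-1$. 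The resulting $L^2$ estimate, paired with a direct pointwise computation of $\bar\partial(\bar g_k/|g|^2)$, controls $\int|\beta|^2|g|^{-2(m+\varepsilon)}e^{-\varphi}d\lambda_n$ by $(1+m/\varepsilon)I$, the extra factor $|g|^{-2}$ in $|\bar\partial\alpha|^2$ accounting for the shift from $m+\varepsilon$ to $m+1+\varepsilon$ in the exponent of $|g|$ on the right-hand side.

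The main obstacle is the curvature inequality itself: carrying out the second fundamental form computation with the weight $e^{-\psi}$ and extracting the sharp constant requires a delicate algebraic argument specific to the rank-$1$ quotient $\Omega\times\mathbb{C}^r\to Q$ defined by $g$. A secondary technical point is exhausting $\Omega$ by pseudoconvex subdomains of $\Omega\setminus\{g=0\}$ on which $\psi$ is smooth, solving the equation there, and then passing to a weak $L^2$ limit to produce the global holomorphic $h$ with the stated bound.
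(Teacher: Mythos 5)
This statement is not proved in the paper: it is quoted as a black box from Demailly's book (Chapter VIII, Theorem 9.10), so there is no internal argument to compare against. Your outline follows exactly the route of the cited reference, which is Skoda's original $L^2$ method: reduce the division problem to a $\bar\partial$-equation for sections of the kernel subbundle $E=\ker(v\mapsto\sum_k v_kg_k)$ using the minimal non-holomorphic solution $\alpha_k=f\bar g_k/|g|^2$, and obtain the a priori estimate from the curvature of $E$ twisted by the weight $(m+\varepsilon)\log|g|^2$. Structurally this is correct, and you correctly identify both the source of the constant $(1+m/\varepsilon)$ and the reason for the exponent shift from $m+\varepsilon$ to $m+1+\varepsilon$.

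That said, as a proof the proposal has a substantive gap: the entire analytic content of the theorem is concentrated in the ``core algebraic identity'' that you invoke but neither state nor derive, namely the pointwise inequality controlling the second-fundamental-form term of $E\subset\Omega\times\mathbb{C}^r$ by the complex Hessian of $\log|g|^2$ with the sharp factor $m=\min\{n,r-1\}$ (Skoda's inequality, Demailly's Lemma preceding the theorem). Without that computation the constant $(1+m/\varepsilon)$ is assumed rather than proved. A secondary inaccuracy: $\Omega\setminus g^{-1}(0)$ need not be exhaustible by pseudoconvex subdomains when $g^{-1}(0)$ has codimension at least $2$; the standard remedy is that the complement of an analytic set in a Stein open set carries a complete K\"ahler metric, so one applies the $L^2$ existence theorem in the complete K\"ahler setting, then lets the auxiliary metric degenerate and extends $h$ across $g^{-1}(0)$ by the $L^2$ Riemann removable-singularity theorem, as you indicate.
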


Moreover, the following strong openness property of multiplier ideal sheaves is also necessary.

\begin{thm} \emph{(\cite{G-Z_open_a, G-Z_open}).} \label{SOC}
Let $\varphi$ be a plurisubharmonic function on complex manifold $X$ and $\mathscr{I}_+(\varphi):=\cup_{\varepsilon>0}\mathscr{I}((1+\varepsilon)\varphi)$. Then
$$\mathscr{I}_+(\varphi)=\mathscr{I}(\varphi),$$
where $\mathscr{I}(\varphi)$ is the sheaf of germs of holomorphic functions $f$ such that $|f|^2e^{-\varphi}$ is locally integrable.
\end{thm}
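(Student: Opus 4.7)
The plan is to prove the nontrivial inclusion $\mathscr{I}(\varphi)\subseteq\mathscr{I}_+(\varphi)$, since the reverse inclusion is immediate from the definitions. The statement is local, so I would fix a point $o\in X$, choose coordinates, and reduce to a small bounded pseudoconvex domain $\Omega\subset\mathbb{C}^n$ containing $o$ on which $\varphi\leq 0$ (after shrinking and subtracting a constant). Let $f$ be a holomorphic representative of the germ at $o$ satisfying $\int_{\Omega}|f|^{2}e^{-\varphi}d\lambda_n<\infty$. The goal becomes producing $\varepsilon>0$ and a neighborhood $U\ni o$ with $\int_{U}|f|^{2}e^{-(1+\varepsilon)\varphi}d\lambda_n<\infty$.

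The driving tool is the \emph{optimal} $L^{2}$ extension theorem (B\l ocki; Guan-Zhou-Zhu) applied to the zero-dimensional submanifold $\{o\}\subset\Omega$: for every plurisubharmonic weight $\psi$ on $\Omega$ and every $c\in\mathbb{C}$ there exists a holomorphic $\tilde F$ on $\Omega$ with $\tilde F(o)=c$ and
\[
\int_{\Omega}|\tilde F|^{2}e^{-\psi}d\lambda_n\leq C(\Omega)\cdot|c|^{2}e^{-\psi(o)},
\]
with \emph{sharp} constant $C(\Omega)$ independent of $\psi$. I would apply this with a family of weights built from Demailly regularizations $\varphi_\nu\downarrow\varphi$ together with terms of the form $\varphi+2\log|f|$, producing holomorphic test functions whose $L^{2}$ behaviour detects whether $f\in\mathscr{I}((1+\varepsilon)\varphi)$.

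The second step is an argument by contradiction: assume $f\notin\mathscr{I}((1+\varepsilon)\varphi)$ for every $\varepsilon>0$. I would normalize the resulting extension sequence, pass to the limit $\varepsilon\downarrow 0$ (jointly with $\nu\to\infty$) using the sharpness of $C(\Omega)$, and confront the limiting inequality with the assumed divergence of $\int_{U}|f|^{2}e^{-(1+\varepsilon)\varphi}d\lambda_n$. The mismatch produces a strict violation, hence the desired contradiction. Theorem~\ref{Skoda} would then enter as an auxiliary device, converting the $L^{2}$ bounds obtained along the way into honest ideal-membership statements within $\mathscr{I}(\varphi)$ and closing the loop.

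The chief obstacle is that the entire argument collapses unless the constant in the $L^{2}$ extension theorem is genuinely \emph{sharp}: any slack in $C(\Omega)$ propagates through the $\varepsilon\downarrow 0$ limit and destroys the contradiction. Equally delicate is the handling of the Demailly regularization $\varphi_\nu\downarrow\varphi$, which must be controlled so that the regularization error does not eat away the fine balance between the exponent $1+\varepsilon$ and the $L^{2}$ norm. This, rather than any single calculation, is the technical heart of the proof.
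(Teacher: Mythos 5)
This statement is not proved in the paper at all: it is the Guan--Zhou strong openness theorem, quoted from \cite{G-Z_open_a, G-Z_open} and used as a black box in the proofs of Theorems \ref{main1} and \ref{main2}, so there is no internal proof to measure your proposal against. Judged on its own terms, your outline does point at the right toolbox --- the known proof is indeed driven by the Ohsawa--Takegoshi $L^2$ extension theorem --- but what you have written is a plan rather than a proof, and the plan has concrete defects.

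First, the heart of the argument is missing. The entire difficulty of strong openness is to convert the hypothesis $\int_U|f|^2e^{-\varphi}<\infty$ into a quantitative decay statement for the mass of $|f|^2e^{-\varphi}$ on sublevel sets of a suitable auxiliary weight, and then to run an iteration and limiting argument --- using the coherence of $\mathscr{I}(\varphi)$ and its closedness under local uniform convergence --- that upgrades the exponent from $1$ to $1+\varepsilon$. Your proposal replaces this mechanism with the phrase ``producing holomorphic test functions whose $L^2$ behaviour detects whether $f\in\mathscr{I}((1+\varepsilon)\varphi)$,'' without specifying the weights, the sublevel sets, or how the contradiction is extracted; this is precisely the step that cannot be waved through. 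Second, the one technical commitment you do make --- that the argument ``collapses unless the constant is genuinely sharp'' --- is incorrect: the published proof uses the ordinary (non-optimal) extension theorem, and what matters is only that the constant is uniform in the plurisubharmonic weight, not that it is optimal. Likewise, Skoda's division theorem plays no role in proving strong openness (in this paper it is applied downstream of Theorem \ref{SOC}, not inside it), and Demailly regularization is not needed. As it stands, the proposal cannot be completed along the lines described without importing the actual Guan--Zhou argument wholesale.
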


\begin{lem} \label{finiteness}
Let $\Omega\subset\mathbb{C}^n\ (n\geq2)$ be a domain and $A\subset\Omega$ an analytic set with pure dimension $d$ through the origin $o$. Then, there exists a neighborhood $U$ of $o$ such that
$$\int_{U{\cap}A}(|z_1|^2+\cdots+|z_n|^2)^{-(d-1)}dV_A<\infty$$
where $dV_A=(\omega^d|_{A_{reg}})/d!,\ \omega=\frac{\sqrt{-1}}{2}\sum\limits_{k=1}^{n}dz_k{\wedge}d\bar{z}_k$.
\end{lem}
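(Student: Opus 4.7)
The plan is to deduce the finiteness from the classical volume estimate for analytic sets (the Lelong--Thie monotonicity formula), combined with a dyadic decomposition of a small ball around $o$.

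First I would invoke the following consequence of the monotonicity of the Lelong number: since $A$ is a pure $d$-dimensional analytic subset of $\Omega$ passing through $o$, the Lelong number of the integration current $[A]$ at $o$ is finite (it equals the multiplicity of $A$ at $o$ in the sense of Thie), and consequently there exist $r_0 > 0$ with $B_{r_0}(o) \subset \Omega$ and a constant $C > 0$ such that
$$\int_{A \cap B_r(o)} dV_A \leq C\, r^{2d} \qquad \text{for every } 0 < r \leq r_0.$$
Here one uses that $dV_A$ on $A_{\mathrm{reg}}$ agrees with the $2d$-dimensional Hausdorff measure, and that $A_{\mathrm{sing}}$ has complex dimension at most $d-1$, hence vanishing $\mathcal{H}^{2d}$-measure, so that the volume of $A\cap B_r(o)$ computed via $dV_A$ is controlled by the Lelong--Thie estimate.

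Next, set $U := B_{r_0}(o)$ and decompose $U \cap A$ (modulo the $\mathcal{H}^{2d}$-null set $\{o\}$) into the dyadic shells
$$S_k := A \cap \{\, 2^{-k-1} r_0 \leq |z| < 2^{-k} r_0 \,\}, \qquad k = 0, 1, 2, \ldots.$$
On $S_k$ the integrand satisfies $(|z_1|^2+\cdots+|z_n|^2)^{-(d-1)} \leq (2^{-k-1} r_0)^{-2(d-1)}$, while the volume estimate yields $\mathrm{Vol}(S_k) \leq C(2^{-k} r_0)^{2d}$. Multiplying and simplifying the exponents,
$$\int_{S_k}(|z_1|^2+\cdots+|z_n|^2)^{-(d-1)}\, dV_A \;\leq\; C' \cdot 4^{-k},$$
for a constant $C'$ depending on $r_0$, $C$ and $d$ but independent of $k$. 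Summing the convergent geometric series over $k \geq 0$ gives the desired finiteness. (The boundary case $d=1$ is immediate since the integrand is identically $1$ and $U \cap A$ has finite area.)

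The only non-elementary ingredient is the volume estimate $\mathrm{Vol}(A \cap B_r(o)) = O(r^{2d})$ as $r \to 0^+$, which is the main obstacle to a fully self-contained argument; it is however standard, following from the Lelong--Thie theorem expressing the multiplicity of $A$ at $o$ via this limit ratio, or equivalently from the monotonicity of $r^{-2d}\mathcal{H}^{2d}(A \cap B_r(o))$. Modulo this classical fact, the proof is a routine dyadic summation and no further difficulty is expected.
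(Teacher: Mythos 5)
Your proof is correct, but it takes a genuinely different route from the paper's. The paper argues via the local parametrization theorem: after a unitary change of coordinates it writes $\omega^d/d!$ as the sum of the coordinate-plane volume forms $dV_I$, realizes $A$ near $o$ as an $s_I$-sheeted branched covering over each $d$-dimensional coordinate plane $\mathbb{C}_I$, pushes the integral down to $\mathbb{C}_I$, and concludes from the elementary fact that $(|w_{k_1}|^2+\cdots+|w_{k_d}|^2)^{-(d-1)}$ is integrable near $0$ in $\mathbb{C}^d$ (polar coordinates give $\int_0^R \rho^{-2(d-1)}\rho^{2d-1}d\rho=\int_0^R \rho\,d\rho<\infty$). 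You instead import the volume growth bound $\mathrm{Vol}(A\cap B_r(o))\le Cr^{2d}$ from Lelong's monotonicity formula and then run a dyadic-shell summation; your exponent bookkeeping is right ($-2(d-1)+2d=2$, giving the $4^{-k}$ decay), and the identification of $dV_A$ with $\mathcal{H}^{2d}$ on $A_{\mathrm{reg}}$ via Wirtinger is a legitimate way to connect $dV_A$ to the Lelong-number normalization. The trade-off is that the paper's argument is self-contained modulo only the branched-covering structure of analytic sets (Chirka), whereas your key input --- the $O(r^{2d})$ bound --- is a heavier classical package that is itself usually established through that same covering structure; on the other hand your shell argument is coordinate-free and makes transparent exactly which exponent threshold ($<2d$) is being used, so it would extend verbatim to $(|z|^2)^{-s}$ for any $s<d$. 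One small simplification available to you: you do not need Thie's theorem identifying the limit with the multiplicity; monotonicity of $r^{-2d}\mathcal{H}^{2d}(A\cap B_r)$ together with local finiteness of the volume of analytic sets already gives the upper bound you use.
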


\begin{proof}
Note that the form $\frac{\omega^d}{d!}$ can be written as $\frac{\omega^d}{d!}={\sum\limits_{\#I=d}}\mspace{-8mu}'dV_I$, where $I=(k_1,...,k_d)$, $dV_I$ denotes the volume form $\prod\limits_{\alpha=1}^{d}\frac{\sqrt{-1}}{2}dz_{k_{\alpha}}{\wedge}d\bar{z}_{k_{\alpha}}$ in the coordinate plane $\mathbb{C}_I$ and $\sum\limits_{\#I=d}\mspace{-8mu}'$ represents the summation over the ordered multi-indices of length $d$. Let $w=Tz$ be a unitary transformation of coordinates satisfying, in the coordinates $w=(w_1,...,w_n)$, there is a bounded neighborhood $U_I$ of $o$ such that the projection $\pi_I:U_I{\cap}A\to U_I'=U_I{\cap}\mathbb{C}_I$ is a branched covering with the number of sheets $s_I$ for every $I$ with $\#I=d$ (see \cite{Chirka}, p.33, Lemma 2). Thus, we have
\begin{equation*}
\begin{split}
&\int_{T(U_I){\cap}A_{reg}}|z|^{-2(d-1)}dV_{z,I}=\int_{U_I{\cap}T^{-1}(A_{reg})}|w|^{-2(d-1)}dV_{w,I}\\
&=s_I\int_{U_I'}|w|^{-2(d-1)}dV_{w,I}\leq s_I\int_{U_I'}(|w_{k_1}|^2+\cdots+|w_{k_d}|^2)^{-(d-1)}dV_{w,I}<\infty.
\end{split}
\end{equation*}

Let $U=T(\cap U_I)$. Then, we obtain
$$\int_{U{\cap}A}(|z_1|^2+\cdots+|z_n|^2)^{-(d-1)}dV_A
\leq{\sum\limits_{\#I=d}}\mspace{-8mu}'\int_{U_I{\cap}T^{-1}(A_{reg})}|w|^{-2(d-1)}dV_{w,I}<\infty.$$
\end{proof}

We are now in a position to prove our main results.\\

\noindent{\textbf{Proof of Theorem \ref{main1}.} It is enough to prove the necessity.

Without loss of generality, we can assume $1\leq\dim_oA=d\leq n-1$, and $(A,o)$ is irreducible by Remark \ref{reducible}.

Suppose that $o$ is a singular point of $A$. It follows from the local parametrization theorem of analytic sets that there is a local coordinate system $(z';z'')=(z_1,...,z_d;z_{d+1},...,z_n)$ near $o$ such that for some constant $C>0$, we have $|z''|\leq C|z'|$ for any $z\in A$ near $o$.

Let $\mathcal{I}\subset\mathcal{O}_{A,o}$ be the ideal generated by germs of holomorphic functions $\bar{z}_1,...,\bar{z}_{d}\in\mathcal{O}_{A,o}$, where $\mathcal{O}_A=\mathcal{O}_{\Omega}/\mathscr{I}_A\big|_A$ and $\bar{z}_k$ are the residue classes of $z_k$ in $\mathcal{O}_{A,o}$. Since $o$ is a singularity of $A$, the embedding dimension $\dim_{\mathbb{C}}\mathfrak{m}_{A,o}/\mathfrak{m}^2_{A,o}$ of $A$ at $o$ is at least $d+1$ (see \cite{De}, Chapter II, Proposition 4.32), which implies that there exists $d+1\leq k_0\leq n$ such that $\bar{z}_{k_0}\not\in\mathcal{I}$.

It follows from $|z''|\leq C|z'|$ for any $z\in A$ near $o$ that $|z_{k_0}|^2\leq C^2|z'|^2$ and $|z'|^2\geq\frac{1}{1+C^2}|z|^2$ on $U\cap A$ for some neighborhood $U$ of $o$. By Lemma \ref{finiteness}, for some smaller neighborhood $U$ of $o$, we have
$$\int_{U{\cap}A}|{z}_{k_0}|^2|z'|^{-2d}dV_A\leq C^2(1+C^2)^{d-1}\int_{U{\cap}A}|z|^{-2(d-1)}dV_A<\infty.$$
Take a small ball $B_r(0)\subset U$. It follows from the $L^2$ extension theorem that there exists a holomorphic function $F\in\mathcal{O}(B_r(0))$ such that $F|_A=\bar{z}_{k_0}$ and $$\int_{B_r(0)}|F|^2|z'|^{-2d}d\lambda_n<\infty.$$
By Theorem \ref{SOC}, for sufficiently small $\varepsilon>0$ and smaller $B_r(0)$ we have
$$\int_{B_r(0)}|F|^2|z'|^{-2(d+\varepsilon)}d\lambda_n<\infty.$$
Then, we infer from Theorem \ref{Skoda} that there exist holomorphic functions $f_k\in\mathcal{O}(B_r(0))$ such that $F=\sum_{k=1}^{d}f_k{\cdot}z_k$, i.e., $F\in(z_1,...,z_{d})\cdot\mathcal{O}_n$.
By restricting to $A$, we have $\bar{z}_{k_0}\in\mathcal{I}$, which contradicts to $\bar{z}_{k_0}\not\in\mathcal{I}$.
\hfill $\Box$

\begin{rem} \label{reducible}
Ohsawa's argument in \cite{O} implies that if $(A,o)$ is reducible, then, for any small ball $B_{r}(0)\subset\Omega$, the $L^2$ extension theorem does not hold for $(B_{r}(0), A)$. In fact, if $(A,o)=(A_1,o)\cup(A_2,o)$ with $(A_i,o)$ are irreducible. Take $f_i\in\mathcal{O}_n$ such that $f_i|_{A_i}\equiv0$ and $f_i|_{A_j}\not\equiv0,\ i\not=j$. Let $\varphi=\log|f_1-f_2|$ and $f=f_1(f_1-f_2)/(f_1+f_2)$. Then, $f|_A=f_1|_A$ and $|f|^2e^{-2\varphi}$ is bounded on $A$ near $o$. The holding of $L^2$ extension theorem implies that there exists a holomorphic function $F\in\mathcal{O}_n$ such that $F=g(f_1-f_2)$ for some $g\in\mathcal{O}_n$ and $F|_A=f$, which implies $gf_2|_{A_1}\equiv0, gf_1|_{A_2}=f_1$. Then, we have $g|_{A_1}\equiv0$ and $g|_{A_2}\equiv1$, which is impossible.
\end{rem}

\noindent{\textbf{Proof of Theorem \ref{main2}.} By Remark \ref{reducible}, it is sufficient to prove the case that $(A,o)$ is irreducible and ord$\mathscr{I}_{A,o}\geq d+1$. It follows from $\dim_oA=d$ and Proposition 4.8 of Chapter II in \cite{De} that, in some local coordinates $(z';z'')=(z_1,...,z_d;z_{d+1},...,z_n)$ near $o$, there exist Weierstrass polynomials
\\
\hspace*{\fill} $P_k=z_k^{m_k}+a_{1k}z_k^{m_k-1}+\cdots+a_{m_kk}\in\mathcal{O}_{k-1}[z_k]\cap\mathscr{I}_{A,o},\ k=d+1,...,n$. \hspace*{\fill} $(*)$
\\
with $m_k=\mbox{ord}_oP_k$. Hence, we have
\\
\hspace*{\fill} $a_{jk}(z_1,...,z_{k-1})\in\mathfrak{m}_{k-1}^j,\ d+1\leq k\leq n,\ 1\leq j\leq m_k$. \hspace*{\fill} $(**)$

Consider the ideal $\mathcal{I}$ in $\mathcal{O}_{A,o}$ generated by germs of holomorphic functions $\bar{z}_1,...,\bar{z}_\lambda\in\mathcal{O}_{A,o}$, where $\bar{z}_k$ are the residue classes of $z_k$ in $\mathcal{O}_{A,o}$ and $d\leq\lambda\leq\min\{\mbox{ord}\mathscr{I}_{A,o}-1,n-1\}$. Then, combining $(*)$ and $(**)$, we obtain that the integral closure $\overline{\mathcal{I}}$ of $\mathcal{I}$ in $\mathcal{O}_{A,o}$ is $\mathfrak{m}_{A,o}=(\bar{z}_1,...,\bar{z}_{n})\cdot\mathcal{O}_{A,o}$, the maximal ideal of $\mathcal{O}_{A,o}$. Moreover, since ord$\mathscr{I}_{A,o}\geq\lambda+1$, we have $(\bar{z}_k)^\lambda\not\in\mathcal{I},\ \lambda+1\leq k\leq n$. In particular, $(\overline{\mathcal{I}})^{\lambda}\not\subset\mathcal{I}$.

Let $B_r(0)\subset\Omega$ be a small enough ball such that all $P_k,\bar{z}_k$ are holomorphic on $A{\cap}B_r(0)$. Let $\hat{g}_k, 1\leq k\leq\lambda$, be arbitrarily holomorphic extension of $g_k$ to $B_r(0)$ with $g_k=\bar{z}_k$ and $\varphi=\frac{\lambda}{2}\log|\hat{g}|^2$. Since $\mathcal{O}_{A,o}$ is reduced, for any $(f,o)\in(\overline{\mathcal{I}})^\lambda$, we have $|f|\leq C\cdot|g|^\lambda$ for some constant $C>0$ by Theorem 2.1 vi) in \cite{LJ-T}. Hence, for some small ball $B_{r_0}(0)$, we can assume that on $A{\cap}B_{r_0}(0)$, $f$ is holomorphic and $|f|^2\cdot e^{-2\varphi}$ is bounded.

Suppose that we have a $L^2$ extension $F\in\mathcal{O}(B_r(0))$ with some $r<r_0$ such that $F|_A=f$ and $$\int_{B_r(0)}|F|^2|\hat{g}|^{-2\lambda}d\lambda_n<\infty.$$
It follows from Theorem \ref{SOC} that for sufficiently small $\varepsilon>0$ and smaller $B_r(0)$ we have
$$\int_{B_r(0)}|F|^2|\hat{g}|^{-2(\lambda+\varepsilon)}d\lambda_n<\infty.$$
By Theorem \ref{Skoda}, there exist holomorphic functions $f_k\in\mathcal{O}(B_r(0))$ such that $F=\sum_{k=1}^{\lambda}f_k\cdot\hat{g}_k$, which implies $(F,o)\in(\hat{g}_1,...,\hat{g}_\lambda)\cdot\mathcal{O}_n$.
By restricting to $A$, we have $(f,o)\in\mathcal{I}$. As $(f,o)$ is arbitrary, we obtain $(\overline{\mathcal{I}})^\lambda\subset\mathcal{I}$, which contradicts to $(\overline{\mathcal{I}})^\lambda\not\subset\mathcal{I}$.

\hfill $\Box$

%\Acknowledgements{Thanks}
\vspace{.1in} {\em Acknowledgements}. The authors would like to sincerely thank our supervisor, Professor Xiangyu Zhou, and his seminar for bringing us to the $L^2$ extension problem in several complex variables and for his valuable help to us in all way.

The authors would also like to sincerely thank Professor Takeo Ohsawa for giving talks on related topics at CAS and sharing his works.

%\medskip

%\vfill

\end{document}